\documentclass[11pt]{amsart}
\usepackage{fullpage}

\usepackage{amssymb}

\usepackage[english]{babel}
\theoremstyle{plain}
\newtheorem{theorem}                {Theorem}      [section]
\newtheorem{proposition}  [theorem]  {Proposition}

\newtheorem{lemma}        [theorem]  {Lemma}

\theoremstyle{definition}

\newtheorem{remark}       [theorem]  {Remark}

\numberwithin{equation}{section}

\def \R{{\mathbb R}}

\numberwithin{equation}{section}

\begin{document}

\title[Equivariant biharmonic maps]{Rotationally symmetric biharmonic maps between models}

\author{S.~Montaldo}
\address{Universit\`a degli Studi di Cagliari\\
Dipartimento di Matematica e Informatica\\
Via Ospedale 72\\
09124 Cagliari, Italia}
\email{montaldo@unica.it}

\author{C.~Oniciuc}
\address{Faculty of Mathematics\\ ``Al.I. Cuza'' University of Iasi\\
Bd. Carol I no. 11 \\
700506 Iasi, ROMANIA}
\email{oniciucc@uaic.ro}

\author{A.~Ratto}
\address{Universit\`a degli Studi di Cagliari\\
Dipartimento di Matematica e Informatica\\
Via Ospedale 72\\
09124 Cagliari, Italia}
\email{rattoa@unica.it}

\begin{abstract}
The main aim of this paper is to study existence and stability properties of rotationally symmetric proper biharmonic maps
between two $m$-dimensional models (in the sense of Greene and Wu).
 We obtain a complete classification of rotationally symmetric, proper biharmonic conformal diffeomorphisms in the special case that $m=4$ and the models have constant sectional curvature. Then, by introducing the Hamiltonian associated to this problem, we also obtain a complete description of conformal proper biharmonic solutions in the case that the domain model
is $\R^4$.  In the second part of the paper we carry out a stability study with respect to equivariant variations (equivariant stability). In particular, we prove that: (i)  the inverse of the stereographic projection from the open $4$-dimensional Euclidean ball to the hyperbolic space is equivariant stable; (ii) the inverse of the stereographic projection from the closed $4$-dimensional Euclidean ball to the sphere is equivariant stable with respect to variations which preserve the boundary data.

\end{abstract}

\thanks{Corresponding author e-mail: montaldo@unica.it\\
Work supported by P.R.I.N. 2010/11 -- Variet\`a reali e complesse: geometria, topologia e analisi armonica -- Italy, and G.N.S.A.G.A., INdAM, Italy;
Romanian National Authority for Scientific Research, CNCS -- UEFISCDI, project number PN-II-ID-PCE-2012-4-0640. This paper is also supported by Contributo d'Ateneo, University of Cagliari, Italy.}

\subjclass{58E20}

\keywords{Biharmonic maps, conformal diffeomorphisms, equivariant theory, second variation, Hamiltonian}

\maketitle

\section{Introduction}\label{intro}
{\it Harmonic maps}  are critical points of the {\em energy} functional
\begin{equation}\label{energia}
E(\varphi)=\frac{1}{2}\int_{M}\,|d\varphi|^2\,dv_g \,\, ,
\end{equation}
where $\varphi:(M,g)\to(N,h)$ is a smooth map between two Riemannian
manifolds $M$ and $N$. In analytical terms, the condition of harmonicity is equivalent to the fact that the map $\varphi$ is a solution of the Euler-Lagrange equation associated to the energy functional \eqref{energia}, i.e.
\begin{equation}\label{harmonicityequation}
    {\rm trace} \, \nabla d \varphi =0 \,\, .
\end{equation}
The left member of \eqref{harmonicityequation} is a vector field along the map $\varphi$, or, equivalently, a section of the pull-back bundle $\varphi^{-1} \, (TN)$: it is called {\em tension field} and denoted $\tau (\varphi)$. In local charts, the tension field is given by the following expression:
\begin{equation}\label{tensionfield}
    \tau^\gamma (\varphi)= g^{ij} \left ( \nabla(d \varphi) \right )_{ij}^\gamma \,\, ,
\end{equation}
where
\begin{equation}\label{secondaformalocale}
    \left ( \nabla(d \varphi) \right )_{ij}^\gamma = \frac{\partial^2 \varphi^\gamma}{\partial x^i \, \partial x^j} - {}^M\Gamma_{ij}^k \, \frac{\partial \varphi^\gamma}{\partial x^k} \, + \, {}^N\Gamma_{\alpha \beta}^\gamma \, \frac{\partial \varphi^\alpha}{\partial x^i} \, \frac{\partial \varphi^\beta}{\partial x^j} \,\, .
\end{equation}
In \eqref{secondaformalocale}, ${}^M\Gamma$ and ${}^N\Gamma$ denote the Christoffel symbols of the Levi-Civita connections of $(M,g)$ and $(N,h)$ respectively. Also, note that Einstein's convention of sum over repeated indices is adopted.
We refer to \cite{PBJCW,JEELLE2,JEELLE1,Xin} for notation and
background on harmonic maps.

A related topic of growing interest deals with the study of the so-called {\it biharmonic maps}: these maps, which provide a natural generalisation of harmonic maps, are the critical points of the bienergy functional (as suggested by Eells--Lemaire \cite{EL83})
\begin{equation}\label{bienergia}
    E_2(\varphi)=\frac{1}{2}\int_{M}\,|\tau (\varphi)|^2\,dv_g \,\, .
\end{equation}
In \cite{Jiang} Jiang derived the first variation and the second variation formulas for the bienergy. In particular, he showed that the Euler-Lagrange equation associated to $E_2(\varphi)$ is
\begin{equation}\label{bitensionfield}
    \tau_2(\varphi) = - J\left (\tau(\varphi) \right ) = - \triangle \tau(\varphi)- \rm{trace} R^N(d \varphi, \tau(\varphi)) d \varphi = 0 \,\, ,
    \end{equation}
where $J$ is (formally) the Jacobi operator of $\varphi$, $\triangle$ is the rough Laplacian defined on sections of $\varphi^{-1} \, (TN)$ and
\begin{equation}\label{curvatura}
    R^N (X,Y)= \nabla_X \nabla_Y - \nabla_Y \nabla_X -\nabla_{[X,Y]}
\end{equation}
is the curvature operator on $(N,h)$.
Equation \eqref{bitensionfield} is a {\it fourth order}  semi-linear elliptic system of differential equations. We also note that any harmonic map is an absolute minimum of the bienergy, and so it is trivially biharmonic; thus, a general working plan is to study the existence of {\it proper} biharmonic maps, i.e., biharmonic maps which are not harmonic. We refer to \cite{Montaldo} for existence results and general properties of biharmonic maps.

{\it Equivariant theory} deals with special families of maps having enough symmetries  to guarantee that harmonicity reduces to the study of a second order {\it ordinary}
differential equation (we refer to \cite{BR, CMOP, EELRAT, MR, RR, Xin} for background and examples). In \cite{MR}, we developped a systematic approach to equivariant theory for biharmonic maps (see \cite{HM}, for recent developments). In this framework, the aim of this work is to study rotationally symmetric biharmonic maps between two $m$-dimensional models (in the sense of \cite{GW}). The study of  rotationally symmetric biharmonic maps has been started in \cite{Montaldobis} and, in the particular case of maps between surfaces, in \cite{WangOuYang}.

In this paper we obtain a complete classification of  proper biharmonic conformal diffeomorphisms between two $m$-dimensional models in the special case that $m=4$ and the models have constant sectional curvature (Proposition~\ref{proposizionediclassificazione}). Then, by introducing the Hamiltonian associated to this problem, we obtain a complete classification of conformal proper biharmonic maps in the case that the domain model is $\R^4$ and the target is any $4$-dimensional model (Theorem~\ref{maintheorem}).

In the second part of the paper we carry out a stability study with respect to equivariant variations (equivariant stability). In particular, we prove that: (i)  the inverse of the stereographic projection from the open $4$-dimensional Euclidean ball to the hyperbolic space is equivariant stable (Theorem~\ref{teoremavariazionesecondaiperbolico}); (ii) the inverse of the stereographic projection from the closed $4$-dimensional Euclidean ball to the sphere is equivariant stable with respect to variations which preserve the boundary data (Theorem~\ref{teoremavariazionesecondaDirichlet}).

\begin{remark}\label{remarksullevariazionisuicompattiD} We stated above that equations \eqref{harmonicityequation} and \eqref{bitensionfield} are the Euler-Lagrange equations associated with the energy functional \eqref{energia} and the  bienergy functional \eqref{bienergia} respectively. We point out that, in the case that the domain manifold $M$ is not compact, the previous claim can be formally verified by considering these functionals over compact domains $D \subset M$, with smooth boundary $\partial D$, and smooth variations $\varphi_t$ which coincide with $\varphi$ on $M \,\backslash\,{\rm int}(D)$. In this context, a map $\varphi$ is a critical point if it is such over any compact domain $D$ in $M$ (see \cite{PBJCW} for details).
\end{remark}

{\bf Acknowledgement}.
The authors wish to thank the referee for some useful comments that have improved the quality of the paper and, in particular, for pointing out the solutions given in Remark~\ref{re:referee}.

\section{Notation and preliminaries}\label{sezvarapproach}

By definition, a point $o$ of a Riemannian manifold $M$ is said to be a \emph{pole} if the exponential map $exp_o \, : \, T_oM \rightarrow M$ is a diffeomorphism. Following \cite{GW}, we say that an $m$-dimensional manifold $(M^m(o),\,g)$ with a pole $o$ is a \emph{model} if and only if every linear isometry of $T_oM$ can be realized as the differential at $o$ of an isometry of $M$. The starting goal of Greene and Wu was to develop a good function  theory on Cartan-Hadamard manifolds (CH-manifolds), i.e., complete simply-connected Riemannian manifolds of nonpositive sectional curvature. Models are more general than CH-manifolds (for instance, their curvature can be of variable sign) and turned out to be a suitable environment for Laplacian and Hessian comparison theorems. A significant geometric property of a model is the fact that we can describe it by means of geodesic polar coordinates centered at the pole $o$, as follows:
\begin{equation}\label{model}
(M^m(o),\,g) = \left ( \,S^{m-1} \times [0,\, + \infty) , \, f^2(r)\, g_{S^{m-1}} \, + \, dr^2 \,\right ) \,\, ,
\end{equation}
where
$$ (\,S^{m-1},\, g_{S^{m-1}} \, )
$$
denotes the $(m-1)$-dimensional Euclidean unit sphere, and the function $f(r)$ is a smooth function which satisfies
\begin{equation}\label{condizioni-su-f}
f(0)=0 \,, \quad f'(0)=1 \quad {\rm and}\quad f(r)>0 \quad {\rm if} \,\, r>0 \,\, .
\end{equation}
We also note that $r$ measures the geodesic distance from the pole $o$. To shorten notation and emphasize the role of the function $f$, we shall write $M_f^m(o)$ to denote a model as in \eqref{model}.

\begin{remark}\label{remarksuR^meH^m} We observe that, if $f(r)=r$, then $M_f^m(o)=\R^m$. We also point out that, if $f(r)= (1\slash c) \, \sinh (r \,c)$ ($c>0$), then $M_f^m(o)$ represents $H^m(-c^2)$, i.e., the $m$-dimensional hyperbolic space of constant sectional curvature $-\, c^2$.
\end{remark}

For future use (see \cite{GW}), we also recall that the radial curvature $K(r)$ ($r>0$) of a model $M_f^m(o)$ is defined as the sectional curvature of any plane which contains $\partial \, \slash\, \partial \, r$ . The radial curvature is related to the function $f(r)$ by means of the following fundamental equation (the Jacobi equation):
\begin{equation}\label{equazionecurvaturaradiale}
    f''(r)\,+\,K(r)\,f(r)\,=\,0 \,\, ,\,\, f(0)=0\,, \,\,f'(0)=1 \,\, .
\end{equation}

The aim of this paper is to study rotationally symmetric maps between two models, i.e. maps of the following type:
\begin{eqnarray}\label{rotationallysymmetricmaps}
 \nonumber
    \varphi_{\alpha}\,: \,\,M_f^m(o) &\to&
    \,\,M' _h \,^m (o') \\
    ( \theta, \, r) \, &\mapsto& \, (\theta, \, \alpha(r)) \,\, ,
\end{eqnarray}
where the function $\alpha(r)$ is smooth on $[0,\,+\infty)$, positive on $(0,\,+\infty)$  and, to ensure continuity, satisfies the boundary condition
\begin{equation}\label{boundaryconditionforalfa}
    \alpha(0)=0 \,\, .
\end{equation}
\begin{remark}\label{remarksuS^m}
With a slight abuse of terminology, in our study of rotationally symmetric biharmonic maps we shall also consider the case that $f(r)$ is defined on a finite interval $[0,\,b]$, with $f(b)=0$ and $f'(b)=-1$. In particular, we shall pay special attention to the case $f(r)= (1 \slash d)\, \sin (r \, d)$, where $d>0$ and $\, 0 \leq r \leq (\pi \slash d )\,$: in this case, our manifold is the Euclidean $m$-sphere $S^m(d^2)$ of constant sectional curvature $d^2$.
\end{remark}
\begin{remark} More generally, one could apply the reduction technique of this paper to the following class of equivariant maps:
\begin{eqnarray}\label{equivariantsymmetricmaps}
\nonumber
    \varphi_{\alpha}\,: \,\,M_f^m(o) &\to&
    \,\,M' _h\,^n (o') \\
    ( \theta, \, r) \, &\mapsto& \, (\Psi_\lambda(\theta), \, \alpha(r)) \,\, ,
\end{eqnarray}
 where $\Psi_\lambda(\theta)$ is a so-called {\it eigenmap} of eigenvalue $\lambda$ . That means that
$\Psi_\lambda:\, S^{(m-1)}\to S^{(n-1)}$ is a harmonic map with {\it constant} energy density equal to $(\lambda \slash 2)$. Important examples of eigenmaps are: the identity map of $S^{(m-1)}$ ($\lambda=m-1$), the k-fold rotation $e^{i\theta} \rightsquigarrow e^{ik\theta}$ of $S^1$ ($\lambda=k^2$); and, also, the Hopf fibrations $S^3 \to S^2$, $S^7 \to S^4$ and $S^{15} \to S^8$, with $\lambda$ equal to 8, 16 and 32 respectively. However, since the primary aim of this paper is the study of conformal diffeomorphisms, we decided to focus on \eqref{rotationallysymmetricmaps}, i.e., on the case that $\Psi_\lambda(\theta)$ is the identity map.
\end{remark}

By way of summary, the unknown function $\alpha(r)$ in \eqref{rotationallysymmetricmaps} has to be determined in such a way that $\varphi_{\alpha}$ be a biharmonic map. To this purpose, a calculation based on \eqref{secondaformalocale} shows that the tension field of a rotationally symmetric map as in \eqref{rotationallysymmetricmaps} is given by
\begin{equation}\label{tensionfieldtrawarped}
    \tau (\varphi_{\alpha})= \left [\ddot{\alpha}(r) + (m-1)\, \frac{f'(r)}{f(r)}\, \dot{\alpha}(r)- (m-1)\, \, \frac{h(\alpha)\,h'(\alpha)}{f^2(r)} \right ] \,\, \frac{\partial}{\partial \alpha} \,\,.
\end{equation}
Therefore, in this case the reduced bienergy is given, up to an irrelevant constant factor, by the following expression:
\begin{equation}\label{reducedbienergiatrawarped}
    E_2(\varphi_\alpha) =  \frac{1}{2}\, \int_0^{+\infty } \left [\ddot{\alpha}(r) + (m-1)\frac{f'(r)}{f(r)}\, \dot{\alpha}(r)- (m-1)\frac{h(\alpha)\,h'(\alpha)}{f^2(r)} \right ]^2 f^{m-1}(r)\, dr\,\,.
\end{equation}
We observe that the reduced bienergy functional \eqref{reducedbienergiatrawarped} is of the form
\begin{equation}\label{bienergiaridotta}
    E_2(\varphi_\alpha)= \int_0^{+\infty} \,\, L(r, \alpha, \dot{\alpha}, \ddot{\alpha}) \, dr \,\, ,
\end{equation}
for a suitable Lagrangian function $L$. We showed in \cite{MR} that, in this symmetric context, the condition of biharmonicity is
\begin{equation}\label{eqeulero2nonesplicita}
     \frac{\partial L}{\partial \alpha} - \frac{d}{dr} \, \left ( \frac{\partial L}{\partial \dot{\alpha}} \right ) + \frac{d\,^2}{dr^2} \, \left ( \frac{\partial L}{\partial \ddot{\alpha}} \right )  \, = \, 0 \,\, .
\end{equation}
Now, a straightforward computation leads us to the explicit expression of \eqref{eqeulero2nonesplicita} in our context (to simplify notation, we write $f$ and $h(\alpha)$ instead of $f(r)$ and $h(\alpha(r))$ respectively):
\begin{equation}\label{eqeulero2}
\begin{aligned}
   & f^{m-5} \Big ((m-1) h(\alpha ) (2 f f''
   h'(\alpha )-2 (m-3) f f' \dot{\alpha} h''(\alpha
   )+2 (m-4) f'^2 h'(\alpha )\\
   &-f^2
   (h^{(3)}(\alpha ) \dot{\alpha}^2+2\ddot{\alpha}
   h''(\alpha ))+(m-1) h'(\alpha )^3)+f
   ((m-3) (m-1) f f'^2 \ddot{\alpha}\\
   &-(m-3) (m-1)
   f'^3 \dot{\alpha}+(m-1) f (f (f^{(3)}
   \dot{\alpha}+2 f'' \ddot{\alpha})-2 \ddot{\alpha}
   h'(\alpha )^2\\
   &-3 \dot{\alpha}^2 h'(\alpha ) h''(\alpha
   ))+(m-1) f' (\dot{\alpha} ((m-4) f
   f''-2 (m-3) h'(\alpha )^2)+2 f^2 \alpha
   ^{(3)})\\
   &+f^3 \alpha ^{(4)})+(m-1)^2
   h(\alpha )^2 h'(\alpha ) h''(\alpha )\Big )=0\,\, .\\
\end{aligned}
\end{equation}

\begin{remark}
We point out that, to  the purpose of comparison  with the equation
given, for $m=2$, in \cite[Corollary~2.3]{WangOuYang}, equation \eqref{eqeulero2}  can be rewritten as follows:
$$
\begin{cases}
F''+(m-1) \dfrac{f f' F'- h'(\alpha)^2 F}{f^2}-(m-1) \dfrac{h(\alpha) h''(\alpha)F}{f^2}=0\\
F=\ddot{\alpha}+(m-1) \dfrac{f'}{f} \dot{\alpha} -(m-1) \dfrac{h(\alpha) h'(\alpha)}{f^2}\,.
\end{cases}
$$
\end{remark}
In summary, a rotationally symmetric map $\varphi_{\alpha}$ as in \eqref{rotationallysymmetricmaps} is \emph{biharmonic} if and only if $\alpha$ is a solution of \eqref{eqeulero2}.

In some cases it is useful to associate to a Lagrangian $L$ as in \eqref{bienergiaridotta} its corresponding Hamiltonian $H$, which is defined as follows (see, for instance, \cite{DNF}):
\begin{equation}\label{definizionehamiltoniana}
    H(r, \alpha, \dot{\alpha}, \ddot{\alpha})=\dot{\alpha}\, \left ( \frac{\partial L}{\partial \dot{\alpha}} -\frac{d\,}{dr} \, \frac{\partial L}{\partial \ddot{\alpha}}
\right ) \,+\, \ddot{\alpha}\, \left ( \frac{\partial L}{\partial \ddot{\alpha}} \right )\,- \, L \,\, .
\end{equation}
An useful feature of the Hamiltonian $H$ is the following well-known property:
\begin{proposition}\label{propositionH} Assume that the Lagrangian $L$ in \eqref{bienergiaridotta} does not depend on $r$ (i.e., $L=L( \alpha, \dot{\alpha}, \ddot{\alpha})$). Then $H$ is constant along any solution $\alpha$ of \eqref{eqeulero2nonesplicita}.
\end{proposition}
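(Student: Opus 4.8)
The plan is to prove the statement by a direct computation of the total derivative $dH/dr$ along a solution, following the classical argument for autonomous (higher-order) variational problems. First I would differentiate the defining expression \eqref{definizionehamiltoniana} term by term, applying the Leibniz rule to each of the three summands. This produces, in particular, the term $-\,dL/dr$; since by hypothesis $L$ has no explicit dependence on $r$, the chain rule yields
\[
\frac{dL}{dr}=\frac{\partial L}{\partial \alpha}\,\dot{\alpha}+\frac{\partial L}{\partial \dot{\alpha}}\,\ddot{\alpha}+\frac{\partial L}{\partial \ddot{\alpha}}\,\alpha^{(3)}\,.
\]

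Next I would collect the terms coming from differentiating
$\dot{\alpha}\big(\tfrac{\partial L}{\partial\dot{\alpha}}-\tfrac{d}{dr}\tfrac{\partial L}{\partial\ddot{\alpha}}\big)+\ddot{\alpha}\,\tfrac{\partial L}{\partial\ddot{\alpha}}$ and substitute the above expression for $dL/dr$. A straightforward bookkeeping of the resulting terms shows that the ones involving $\ddot{\alpha}\,\tfrac{\partial L}{\partial\dot{\alpha}}$, the ones involving $\alpha^{(3)}\,\tfrac{\partial L}{\partial\ddot{\alpha}}$, and the ones involving $\ddot{\alpha}\,\tfrac{d}{dr}\tfrac{\partial L}{\partial\ddot{\alpha}}$ cancel in pairs, and one is left precisely with
\[
\frac{dH}{dr}=-\,\dot{\alpha}\left(\frac{\partial L}{\partial \alpha}-\frac{d}{dr}\,\frac{\partial L}{\partial \dot{\alpha}}+\frac{d^{\,2}}{dr^{2}}\,\frac{\partial L}{\partial \ddot{\alpha}}\right)\,.
\]

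Finally, if $\alpha$ is a solution of \eqref{eqeulero2nonesplicita}, the parenthesis on the right-hand side vanishes identically, hence $dH/dr\equiv 0$ and $H$ is constant along $\alpha$, as claimed. The argument is entirely elementary; the only point that requires a little care is to distinguish consistently between the partial derivatives of $L$ (viewed as a function of the three independent slots $\alpha,\dot{\alpha},\ddot{\alpha}$) and the total derivatives $\tfrac{d}{dr}$ along the curve $\alpha(r)$, so that the cancellations are performed correctly. No genuine obstacle is expected.
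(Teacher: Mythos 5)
Your proposal is correct and is exactly the ``straightforward, direct computation of $\dot H$ along $\alpha$'' that the paper invokes but omits: the cancellations you describe do occur, leaving $dH/dr=-\dot\alpha\bigl(\tfrac{\partial L}{\partial\alpha}-\tfrac{d}{dr}\tfrac{\partial L}{\partial\dot\alpha}+\tfrac{d^2}{dr^2}\tfrac{\partial L}{\partial\ddot\alpha}\bigr)$, which vanishes along solutions of \eqref{eqeulero2nonesplicita}. No discrepancy with the paper's (omitted) argument.
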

The verification of the previous proposition amounts to a straightforward, direct computation of $\dot{H}$ along $\alpha$, so we omit the details. In this case, we say that $H$ is a \emph{prime integral} of the biharmonicity equation.

\section{Biharmonic, rotationally symmetric, conformal maps between models of constant sectional curvature}\label{conformal-maps}

The difficulty of the general problem (fourth order equations) suggests to restrict investigation to speficic, geometrically significant, families of maps. In particular, in this section we look for  rotationally symmetric, proper biharmonic \emph{conformal} diffeomorphisms between $m$-dimensional models of constant sectional curvature. We shall be able to obtain a complete description of such maps.

First, let us point out when a rotationally symmetric map as in \eqref{rotationallysymmetricmaps} is conformal. Comparing dilations of vectors which are respectively orthogonal and tangent to the radial direction, we easily find that a map of the type \eqref{rotationallysymmetricmaps} is conformal iff
%\begin{equation}\label{equazionediconformalita}
$$
    \dot{\alpha}^2=\frac{h^2(\alpha)}{f^2(r)} \,\,
    $$
%\end{equation}
or, taking into account the boundary condition \eqref{boundaryconditionforalfa},
\begin{equation}\label{equazionediconformalita}
    \dot{\alpha}=\frac{h(\alpha)}{f(r)} \,\, .
\end{equation}
Using \eqref{equazionediconformalita} into the biharmonicity equation \eqref{eqeulero2}, we find that the condition for biharmonicity of a conformal map of the type \eqref{rotationallysymmetricmaps} is the following:

\begin{equation}\label{equazionebiarmonicaeconformeperognim}
   \begin{aligned}
& (m-2) f^{m-5} h(\alpha )\Big(f^2 f^{(3)}+h'(\alpha
   ) \big(4 f f''+(m-5) h(\alpha ) h''(\alpha
   )\big)\\
   &+(3 m-14) f'^2 h'(\alpha )-2 (m-4) f'^3+\\
   &f'
   \big((m-7) f f''-2 (m-4) h(\alpha ) h''(\alpha )-2
   (m-4) h'(\alpha )^2\big)-\\
   & h(\alpha )^2 h^{(3)}(\alpha
   )+(m-2) h'(\alpha )^3\Big) = 0\,\, .
\end{aligned}
\end{equation}
In particular, if $m=4$, equation \eqref{equazionebiarmonicaeconformeperognim} becomes:
\begin{equation}\label{equazionebiarmonicaeconformem=4}
\begin{aligned}
&\frac{2 h(\alpha)}{f} \,\,  \Big(f^2 f^{(3)}+h'(\alpha)
   \big(4 f f''-h(\alpha) h''(\alpha)\big) \\
   & -2
   f'^2 h'(\alpha )-3 f f' f''-h(\alpha )^2
   h^{(3)}(\alpha)+2 h'(\alpha)^3\Big) =0 \,\, .
   \end{aligned}
\end{equation}

Now, we are in the position to provide a complete description of solutions in the case of maps between 4-dimensional models of constant sectional curvature. More precisely, we have:

\begin{proposition}\label{proposizionediclassificazione} Let us consider rotationally symmetric maps as in \eqref{rotationallysymmetricmaps}-\eqref{boundaryconditionforalfa}. Let us assume that $m=4$ and that both models have constant sectional curvature (see Remarks \ref{remarksuR^meH^m} and \ref{remarksuS^m}). Then the  biharmonic conformal diffeomorphisms of type  \eqref{rotationallysymmetricmaps}-\eqref{boundaryconditionforalfa} can be enumerated as follows ($c,\,d$ denote real positive constants):
\begin{enumerate}
\item[\textbf{Case 1}] - $f(r)=r\,.$
\begin{enumerate}
\item[A] - $h(\alpha)=\alpha$, $\alpha(r)= c\,r$ (harmonic diffeomorphisms from $\R^4$ to itself);
\item[B]- $h(\alpha)=(1/d)\, \sin(d\, \alpha)$ and
\begin{equation}\label{definizionesoluzionedaeuclideoasfera}
    \alpha(r)= \,\,\frac{2}{d} \, \arctan (c^2\,r)\, ,\,\,
\end{equation}
(proper biharmonic diffeomorphisms from $\R^4$ to $S^4(d^2)\smallsetminus\{{\rm south \,\, pole}\}$);
\item[C]- $h(\alpha)=(1/d )\,\sinh(d\, \alpha)$ and
\begin{equation}\label{definizionesoluzionedaeuclideoaiperbolico}
    \alpha(r)= \,\, \,\,\frac{2}{d} \, \tanh ^{-1} (c^2\,r)\,\,\,\, (0 \leq r < \frac{1}{c^2})
\end{equation}
(proper biharmonic diffeomorphisms from $B^4(1 \slash c^2)$ (i.e., the open ball of radius $(1 \slash c^2)$ in $\R^4$) to $H^4(-d^2)$).
\end{enumerate}
\item[\textbf{Case 2}] -  $f(r)=(1/c)\, \sin(c \, r).$
\begin{enumerate}
\item[A] - if $h(\alpha)=\alpha$, then there is no solution;
\item[B]-  if $h(\alpha)=(1/d)\, \sin(d\,  \alpha)$, then
$$
\alpha (r)= (c \slash d)\,r\,\,(0\leq r \leq \frac{\pi}{c})
$$
gives rise to a harmonic conformal diffeomorphism, but in this case we do not have proper biharmonic examples;
\item[C]-  if $h(\alpha)=(1/d)\, \sinh(d \,\alpha)$, then there is no solution.
\end{enumerate}
\item[\textbf{Case 3}] -  $f(r)=(1/c) \sinh(c\, r).$
\begin{enumerate}
\item[A] - if $h(\alpha)=\alpha$, then there is no solution;
\item[B]-  if $h(\alpha)=(1/d)\, \sin(d \,\alpha)$, then there is no solution;
\item[C]-  if $h(\alpha)=(1/d)\, \sinh(d \,\alpha)$, then
$$
\alpha (r)= (c \slash d)\,r \,\, (r \geq 0),
$$
 produces a harmonic conformal diffeomorphism, while there is no proper biharmonic example.
 \end{enumerate}
 \end{enumerate}
\end{proposition}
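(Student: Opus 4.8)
The plan is to combine the two reductions already at hand: the conformality equation \eqref{equazionediconformalita}, which determines $\alpha$ up to a single constant of integration, and equation \eqref{equazionebiarmonicaeconformem=4}, which is the biharmonicity condition of a conformal map when $m=4$. Since $f>0$ and $h(\alpha)>0$ on $(0,+\infty)$, the factor $2h(\alpha)/f$ in \eqref{equazionebiarmonicaeconformem=4} never vanishes, so $\varphi_\alpha$ is biharmonic precisely when the bracketed expression vanishes identically. I would then insert the constant-curvature hypothesis: writing $K_f$ and $K_h$ for the (constant) radial curvatures, the Jacobi equation \eqref{equazionecurvaturaradiale} gives
\[
f''=-K_f f,\qquad f^{(3)}=-K_f f',\qquad (f')^2=1-K_f f^2
\]
(the last because $(f')^2+K_f f^2$ is constant by \eqref{equazionecurvaturaradiale}, with value $1$ at $r=0$), and the three analogous identities for $h$, evaluated at $\alpha$. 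Substituting all of these into the bracket of \eqref{equazionebiarmonicaeconformem=4} produces a complete cancellation, leaving only
\[
2\,K_f\,f^2\,\bigl(f'(r)-h'(\alpha(r))\bigr)=0\,.
\]
Hence a conformal $\varphi_\alpha$ between $4$-dimensional space forms is biharmonic if and only if \emph{either} $K_f=0$ (i.e. $f(r)=r$) \emph{or} $f'(r)\equiv h'(\alpha(r))$.

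Running the same substitution on the tension field, plugging $\dot\alpha=h(\alpha)/f$ into \eqref{tensionfieldtrawarped} with $m=4$ yields, after simplification,
\[
\tau(\varphi_\alpha)=\frac{2\,h(\alpha)\,\bigl(f'(r)-h'(\alpha(r))\bigr)}{f^2}\,\frac{\partial}{\partial\alpha}\,,
\]
so $\varphi_\alpha$ is harmonic exactly when $f'(r)\equiv h'(\alpha(r))$. Comparing with the previous paragraph, when $K_f\neq0$ the conformal biharmonic maps coincide with the conformal harmonic ones; this already shows that Cases 2 and 3 contain no proper biharmonic examples and reduces them to the search for conformal \emph{harmonic} diffeomorphisms.

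Next I would produce the conformal maps explicitly by integrating \eqref{equazionediconformalita} through separation of variables, $\int d\alpha/h(\alpha)=\int dr/f(r)$, in each of the nine pairings of $f(r)\in\{r,\ (1/c)\sin(cr),\ (1/c)\sinh(cr)\}$ with the corresponding three choices of $h$; the additive constant is removed by \eqref{boundaryconditionforalfa}, leaving a one-parameter family with a multiplicative constant $A>0$ (of the shape $\alpha=Ar$, $\tan(d\alpha/2)=Ar$, $\tanh(d\alpha/2)=Ar$, $\tan(d\alpha/2)=A\tan(cr/2)$, $\tanh(d\alpha/2)=A\tan(cr/2)$, $\tanh(d\alpha/2)=A\tanh(cr/2)$, and so on). For each family one reads off its natural domain and image, which yields the geometric content of the statement (a $\tanh^{-1}$ confines the domain to a Euclidean ball, a spherical target confines $\alpha$ to $[0,\pi/d)$ or to the full $[0,\pi/d]$, a hyperbolic domain with Euclidean target gives a ball as image, and so on). In Case 1 ($K_f=0$) every member of the family is biharmonic by the first paragraph, and properness is decided by $h'(\alpha)$: one has $h'(\alpha)\equiv1$, hence harmonic, only when $h(\alpha)=\alpha$, while $h'(\alpha)=\cos(d\alpha)<1$ for $h(\alpha)=(1/d)\sin(d\alpha)$ and $h'(\alpha)=\cosh(d\alpha)>1$ for $h(\alpha)=(1/d)\sinh(d\alpha)$ whenever $r>0$, so those two families are proper biharmonic. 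In Cases 2 and 3 ($K_f\neq0$) I impose $f'(r)=h'(\alpha(r))$; differentiating this relation and using the curvature identities converts it into $K_f\,f^2(r)=K_h\,h(\alpha(r))^2$, which — since $f,h>0$ for $r>0$ — is impossible unless $K_h$ has the same nonzero sign as $K_f$ (this rules out Cases 2A, 2C, 3A, 3B), and in the two surviving cases 2B and 3C it forces, together with $f'=h'(\alpha)$, $\alpha(0)=0$ and the monotonicity of $\alpha$, the linear solution $\alpha(r)=(c/d)r$, a harmonic conformal diffeomorphism between the two full space forms of the same curvature sign, with no proper biharmonic companion.

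The step I expect to be the real work is the algebraic collapse in the first paragraph: one must substitute the six curvature identities into the long bracket of \eqref{equazionebiarmonicaeconformem=4} and verify that, term by term, everything cancels except $2K_f f^2\bigl(f'-h'(\alpha)\bigr)$. Once this is done, what remains is organized bookkeeping — the nine elementary separation-of-variables integrals, the identification of the domains and images, and the short rigidity argument (via $K_f f^2=K_h h^2$ and monotonicity of $\alpha$) that singles out the linear map as the only conformal harmonic diffeomorphism in the admissible curvature configurations.
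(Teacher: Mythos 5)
Your argument is correct, and I checked the one step you flag as ``the real work'': substituting $f''=-K_ff$, $f^{(3)}=-K_ff'$, $(f')^2=1-K_ff^2$ and the analogous identities for $h$ into the bracket of \eqref{equazionebiarmonicaeconformem=4} does collapse it to $2K_ff^2\bigl(f'-h'(\alpha)\bigr)$ (the $K_hh^2h'$ terms cancel as $1+1-2$ and the bare $h'$ terms as $-2+2$), and this reproduces exactly the paper's case-by-case left-hand sides, e.g.\ $-8\sin^2(cr/2)\sin^2(cr)$ in Case 2A and $4\sin^2(cr)(\cos(cr)-\cos(d\alpha))$ in Case 2B. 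Your simplification of the tension field of a conformal map to $\frac{2h(\alpha)(f'-h'(\alpha))}{f^2}\frac{\partial}{\partial\alpha}$ is also correct. However, your route is genuinely different from the paper's. The paper proves the Proposition by brute force: it plugs each of the nine pairs $(f,h)$ into \eqref{equazionebiarmonicaeconformem=4} separately and inspects the resulting trigonometric or hyperbolic expression to decide whether it can vanish. You instead establish a single structural dichotomy valid for all conformal maps between $4$-dimensional space forms --- biharmonicity holds iff $K_f=0$ or $f'\equiv h'(\alpha)$, the latter being exactly harmonicity --- and then reduce the enumeration to integrating the conformality ODE and a short rigidity argument ($K_ff^2=K_hh^2$ plus monotonicity) in the curved-domain cases. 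What your approach buys is a conceptual explanation of the pattern the paper only exhibits: proper biharmonic conformal examples can occur only over a flat domain, and over a curved domain conformal biharmonic coincides with conformal harmonic. What the paper's approach buys is that each individual verification is a one-line elementary identity requiring no preliminary lemma. Both yield the same complete list, including domains and images.
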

\begin{proof}\label{proofofproposizionediclassificazione} The proof amounts to a case by case analytical inspection of \eqref{equazionebiarmonicaeconformem=4}. Indeed,\\
\textbf{Case 1 - $f(r)=r\,.$} In this case, equation \eqref{equazionebiarmonicaeconformem=4} becomes (here $h=h(\alpha)$):
$$
 2\,h'^3-h'\,(2+h\,h'')\,-\, h^2\, h''' \,=\,0
$$
which is satisfied for

A - $h(\alpha)=\alpha$ (harmonic maps);

B - $h(\alpha)=(1/d)\, \sin(d\, \alpha)$: in this case we have proper biharmonic diffeomorphisms whose explicit expression \eqref{definizionesoluzionedaeuclideoasfera} for $\alpha(r)$ can be derived directly by the condition of conformality \eqref{equazionediconformalita});

C - $h(\alpha)=(1/d)\,\sinh(d\, \alpha)$, which again produces proper biharmonic diffeomorphisms (also in this case, the explicit expression \eqref{definizionesoluzionedaeuclideoaiperbolico} for $\alpha(r)$ can be derived directly by the condition of conformality \eqref{equazionediconformalita}). It is important to point out that, in this case, solutions are defined on a finite interval only.\\
\textbf{Case 2 -  $f(r)=(1/c)\, \sin(c \, r)\,.$} The biharmonicity condition \eqref{equazionebiarmonicaeconformem=4} becomes

\begin{equation}\label{eq:33caso2}
\begin{aligned}
2 h'(\alpha ) \left(\cos (2 c \,r)-h(\alpha )
   h''(\alpha )-3\right)+2 \sin (c\, r) \sin (2 c\, r)
   -2 h(\alpha )^2
   h^{(3)}(\alpha )+4 h'(\alpha )^3=0 \,\,.
\end{aligned}
\end{equation}

Therefore:

A - if $h(\alpha)=\alpha$,  the left-hand side of \eqref{eq:33caso2} becomes
$$
-8 \sin ^2\left(\frac{c\, r}{2}\right) \sin ^2(c \,r) \,\,,
$$
which cannot vanish identically, so that there is no solution in this case.

B -  if $h(\alpha)=(1/d)\, \sin(d\, \alpha)$  \eqref{eq:33caso2} becomes
$$
4 \sin ^2(c \,r) (\cos (c\, r)-\cos (d\,\alpha))=0 \,\,,
$$
so that in this case we have harmonic diffeomorphisms, but not proper biharmonic examples.

C -  if $h(\alpha)=(1/d)\, \sinh(d \,\alpha)$ the left-hand side of \eqref{eq:33caso2} becomes
$$
4 \sin ^2(c\, r) (\cos (c \,r)-\cosh (d\,\alpha ))\,\,.
$$
Now, since $\cosh x \geq 1$ for all $ x \,\in \, \R$, it is obvious that the latter
cannot vanish identically, thus there is no solution in this case too.\\
\textbf{Case 3 -  $f(r)=(1/c) \sinh(c \,r)\,.$ }The biharmonicity condition becomes
\begin{eqnarray}\label{eq:33caso3}
-2 \left(\sinh (c\, r) \sinh (2 \,c
   r)+h^{(3)}(\alpha ) h(\alpha )^2\right)\nonumber\\
    +2 h'(\alpha )
   \left(\cosh (2 c \,r)-h(\alpha ) h''(\alpha )-3\right)+4
   h'(\alpha )^3=0 \,\, .
\end{eqnarray}

Now,

A - if $h(\alpha)=\alpha$, the left-hand side of \eqref{eq:33caso3} becomes
$$
-8 \sinh ^2\left(\frac{c \,r}{2}\right) \sinh ^2(c\, r)\,\,,
$$
from which we deduce nonexistence in this case.

B -  if $h(\alpha)=(1/d)\, \sin(d\, \alpha)$, \eqref{eq:33caso3} becomes
$$
4 \sinh ^2(c r) (\cos (d\,\alpha  )-\cosh (c\, r))=0 \,\,.
$$
As in Case 2C above, we easily conclude that there is no nontrivial solution.

C -  if $h(\alpha)=(1/d)\, \sinh(d\, \alpha)$, \eqref{eq:33caso3} becomes
$$
-4 \sinh ^2(c \,r) (\cosh (c\, r)-\cosh (d\,\alpha))=0 \,\,,
$$
which simply yields harmonic diffeomorphisms, but not proper biharmonic examples.
\end{proof}

\begin{remark}
We point out that \eqref{definizionesoluzionedaeuclideoasfera} and \eqref{definizionesoluzionedaeuclideoaiperbolico} correspond to a dilation of $\R^4$ composed with the inverse of the stereographic projection of $S^4(d^2)$ and $H^4(-d^2)$ respectively.
\end{remark}

\begin{remark} A direct, case by case inspection shows that, if $m>2$ and $m\neq4$, then there exists no rotationally symmetric, conformal, proper biharmonic map between $m$-dimensional models of constant sectional curvature. By way of example, if $f(r)=r$ and $h(\alpha)= \sin \alpha$, then condition \eqref{equazionebiarmonicaeconformeperognim} becomes:
\begin{equation}\label{casogeneraleconformebiharmonicesempio}
    4\,(m-2)\,(m-4)\,r^{m-5}\,\sin (2\,\alpha) \,\sin^4 (\alpha\slash2) \,=\,0 \, \, ,
\end{equation}
which is not possible when $m \neq 2,\,4$. The other cases are similar, so we omit further details.
\end{remark}

\begin{remark}\label{Jager-Kaul} It is interesting to compare the proper biharmonic diffeomorphisms from $\R^4$ to $S^4 \smallsetminus \{{\rm south \,\, pole} \}$ with the qualitative behaviour of rotationally symmetric harmonic maps (see \cite{JK}). In particular, J\"{a}ger and Kaul proved that the image of the functions $\alpha(r)$ associated to these harmonic maps cover a range $[0,\,R_4]$, with $(\pi \slash 2) < R_4 < \pi$, and $\alpha(r)$ oscillates around $(\pi \slash 2)$ as $r \rightarrow +\infty$ (further details concerning the numerical value of $R_4$ can be found in \cite{JK}).
J\"{a}ger and Kaul applied their results to draw some interesting conclusions concerning the existence of rotationally symmetric solutions to the Dirichlet problem for maps from the Euclidean unit $m$-ball $B^m$ to $S^m$. In particular, if $m=4$, they proved that the Dirichlet problem with boundary data
\begin{equation}\label{dirichletboundarydata}
    (\theta,\,1) \rightarrow (\theta,\,\alpha(1)=R^*)
\end{equation}
admits a rotationally symmetric solution if and only if:
\begin{equation}\label{soluzionijagerkaul}
    0 \leq R^* \leq R_4 \,\,.
\end{equation}
By contrast, the existence of the  proper biharmonic conformal diffeomorphisms of Proposition \ref{proposizionediclassificazione} implies that the boundary value problem \eqref{dirichletboundarydata} admits proper biharmonic solutions for all
$$
0 \leq R^* < \pi \,\,.
$$
\end{remark}

\begin{remark}\label{lavoroLoubeaueBaird} The explicit solutions of Cases 1B and 1C above were also studied in \cite{Baird-et-al} and \cite{Loubeau-et-al}. In particular, it was shown in \cite{Loubeau-et-al} that they do not provide examples of biharmonic morphisms.
\end{remark}

\begin{remark}\label{re:referee}
If, in the context of Proposition~\ref{proposizionediclassificazione}, we admit solutions which are not continuous at the pole (i.e. which do not satisfy the boundary condition \eqref{boundaryconditionforalfa}), then we have some further examples of proper biharmonic conformal diffeomorphisms associated with the choice
\begin{equation}\label{equazionediconformalita-bis}
    \dot{\alpha}=-\frac{h(\alpha)}{f(r)}
\end{equation}
in place of  \eqref{equazionediconformalita}. More precisely, by using similar arguments, it is not difficult to check that, in Case 1 ($f(r)=r$), we have:

A - if $h(\alpha)=\alpha$, then $\alpha(r)=c/r$ provides the proper biharmonic, conformal inversion $\varphi:\R^4\setminus\{0\}\to \R^4\setminus\{0\}$ given by
$\varphi(x)=cx/|x|^2$.

B - if $h(\alpha)=(1/d)\, \sin(d\, \alpha)$, then $\alpha(r)=(2/d) \arctan(c^2/r)$ gives rise to a family of proper biharmonic conformal diffeomorphisms $\varphi:\R^4\setminus\{0\}\to S^4(d^2)\setminus\{\text{two poles}\}$, obtained by composing the inversion (we refer to A above) with the inverse of stereographic projection. Similarly,

C if $h(\alpha)=(1/d)\,\sinh(d\, \alpha)$, then $\alpha(r)=(2/d) \tanh^{-1}(c^2/r)$ gives rise to  proper biharmonic conformal diffeomorphisms $\varphi:\R^4\setminus\{\overline{B^4 (c^2)}\}\to H^4(-d^2)\setminus\{\text{pole}\}$.

In the remaining cases ($f(r)=(1/c) \sin(c \,r)$ or $f(r)=(1/c) \sinh(c \,r)$), an analysis similar to that in the proof of Proposition~\ref{proposizionediclassificazione} shows that there are no examples of proper biharmonic conformal diffeomorphisms
which satisfy \eqref{equazionediconformalita-bis}.

\end{remark}

\section{Further developments: conformal, proper biharmonic maps from $\R^m$ to an $m$-dimensional model}

A natural, general development of the work which we carried out in Section \ref{conformal-maps} is to study when a \emph{conformal}, rotationally symmetric map as in \eqref{rotationallysymmetricmaps} is proper biharmonic.

Since the general case appears to be difficult, we start with the case that the domain model is the Euclidean space. In particular,
using the Hamiltonian function $H$ defined in \eqref{definizionehamiltoniana}, we shall obtain a complete answer in the case that the domain model is $\R^4$.

In order to simplify the analysis, it is convenient to perform the following change of variable:
\begin{equation}\label{cambiovariabile euclideanspaces}
    r=e^t \, , \quad t \in \R \, , \qquad \beta(t)= \alpha(e^t) \, \, ,
\end{equation}
so that the boundary condition \eqref{boundaryconditionforalfa} becomes
\begin{equation}\label{boundaryconditionforbeta}
    \lim_{t\rightarrow -\,\infty} \, \beta (t) = 0 \,\, .
\end{equation}
Then the reduced bienergy functional \eqref{reducedbienergiatrawarped} takes the following form:
\begin{equation}\label{reducedbienergiabeta}
    E_2(\varphi_\beta) =\frac{1}{2}  \int_{\R} \,\,  \left [\,\ddot{\beta}+ (m-2)\, \dot{\beta}\, - \,(m-1) \,h (\beta) \, h' (\beta)\,\right ] ^2 \,\,e^{(m-4)t} \, \, dt \,\, .
\end{equation}
Also, we observe that, in terms of $\beta$, the conformality condition \eqref{equazionediconformalita} becomes
\begin{equation}\label{conformalitadopocambiovariabile}
    \dot{\beta}=h(\beta) \,\, .
\end{equation}
Here, and also in Section \ref{Equivariant-stability} below, we shall refer to the same rotationally symmetric map by writing $\varphi_\alpha$ or $\varphi_\beta$, the choice for $\alpha$ or $\beta$ to describe the map being the one which is more suitable to simplify a specific discussion or calculation.
Next, by using \eqref{eqeulero2}, we compute explicitly the condition of biharmonicity. Together with conformality \eqref{conformalitadopocambiovariabile} and $m=4$, that leads us again to the condition of Case 1 of the previous section, i.e.,
\begin{equation}\label{biharmonic+conformal}
    2\,h'^3-h'\,(2+h\,h'')\,-\, h^2\, h''' \,=\,0 \,\, .
\end{equation}

Now, we can state our main result:
\begin{theorem}\label{maintheorem} Suppose that, in the notation of \eqref{rotationallysymmetricmaps},
\begin{eqnarray}\label{rotationallysymmetricmapsdaerre4}
 \nonumber
    \varphi_{\alpha}\,: \,\,\, \R^4 &\to&
    \,\,M' _h\,^4 (o') \\
    ( \theta, \, r) \, &\mapsto& \, (\theta, \, \alpha(r)) \,\, .
\end{eqnarray}
is a  rotationally symmetric, proper biharmonic conformal diffeomorphism. Then $  \varphi_{\alpha}$ is either as in Case 1B or 1C of Proposition \ref{proposizionediclassificazione}.
\end{theorem}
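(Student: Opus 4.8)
The plan is to bring into play the Hamiltonian $H$ defined in \eqref{definizionehamiltoniana}. The decisive observation is that, after the change of variable \eqref{cambiovariabile euclideanspaces}, the weight $e^{(m-4)t}$ appearing in \eqref{reducedbienergiabeta} is identically $1$ when $m=4$, so that the reduced Lagrangian
\[
L=\tfrac12\left[\,\ddot\beta+2\dot\beta-3\,h(\beta)\,h'(\beta)\,\right]^2
\]
does not depend explicitly on $t$; hence, by Proposition~\ref{propositionH}, $H$ is constant along the (biharmonic) solution $\beta$. Since the biharmonicity-plus-conformality condition has already been reduced to \eqref{biharmonic+conformal}, and since the target model is encoded by $h$ while $\alpha$ is then recovered from $h$ through \eqref{equazionediconformalita} and \eqref{boundaryconditionforalfa}, it suffices to determine all admissible functions $h$.

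First I would evaluate $H$ along a conformal solution. Writing $\tau=\ddot\beta+2\dot\beta-3hh'$ (so $L=\tfrac12\tau^2$, $\partial L/\partial\ddot\beta=\tau$, $\partial L/\partial\dot\beta=2\tau$), the Hamiltonian reads
\[
H=\dot\beta\,(2\tau-\dot\tau)+\ddot\beta\,\tau-\tfrac12\tau^2 .
\]
Now I would insert the conformality relation \eqref{conformalitadopocambiovariabile}, i.e. $\dot\beta=h(\beta)$, which gives $\ddot\beta=hh'$, $\beta^{(3)}=h(h'^2+hh'')$, hence $\tau=2h(1-h')$ and $\dot\tau=2h(h'-h'^2-hh'')$; a short computation then yields
\[
H=2\,h^2\left(1-(h')^2+h\,h''\right).
\]
Letting $t\to-\infty$ and using the boundary condition \eqref{boundaryconditionforbeta}, one has $\beta\to0$, hence $h(\beta)\to h(0)=0$ while the bracket stays bounded (recall $h'(0)=1$); thus the constant value of $H$ is $0$, and since $h>0$ on $(0,+\infty)$ we obtain
\[
h\,h''-(h')^2+1=0\qquad\text{on }(0,+\infty).
\]

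The final step is the integration of this second order equation. Viewing $p=h'$ as a function of $h$ (legitimate near the pole, where $h'\approx1$) and setting $q=p^2$, the equation becomes linear, $\tfrac{h}{2}\,dq/dh=q-1$, whose solutions are $(h')^2=1+A\,h^2$ for a real constant $A$. Coupled with $h(0)=0$, $h'(0)=1$, this leaves precisely three cases: $A=0$ gives $h(\alpha)=\alpha$, and \eqref{equazionediconformalita} then forces $\alpha(r)=cr$, a harmonic (hence not proper) map; $A=-d^2<0$ gives $h(\alpha)=(1/d)\sin(d\alpha)$, so that the target is $S^4(d^2)$ and, by \eqref{equazionediconformalita}, $\alpha(r)=(2/d)\arctan(c^2 r)$ as in \eqref{definizionesoluzionedaeuclideoasfera}, i.e. Case 1B of Proposition~\ref{proposizionediclassificazione}; $A=d^2>0$ gives $h(\alpha)=(1/d)\sinh(d\alpha)$, the target is $H^4(-d^2)$ and $\alpha(r)=(2/d)\tanh^{-1}(c^2 r)$ as in \eqref{definizionesoluzionedaeuclideoaiperbolico}, i.e. Case 1C. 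Discarding the flat (non-proper) solution finishes the proof.

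I expect the only delicate points to be the passage to the limit $t\to-\infty$ that pins the Hamiltonian constant down to $0$ — this is precisely where the boundary condition \eqref{boundaryconditionforalfa} enters in an essential way — and the remark that the diffeomorphism hypothesis forces the sign choice $\dot\alpha=+\,h(\alpha)/f(r)$ in passing from $\dot\alpha^2=h^2/f^2$ to \eqref{equazionediconformalita} (the opposite sign produces maps that fail to be continuous at the pole, cf. Remark~\ref{re:referee}). The algebraic manipulations for $H$ and the quadrature of $h\,h''-(h')^2+1=0$ are routine.
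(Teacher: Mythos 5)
Your proposal is correct, and its core is identical to the paper's argument: compute the Hamiltonian along a conformal solution to get $H=2h^2\left(1-(h')^2+h\,h''\right)$ (the paper records the same expression up to the factor $2$), use the autonomy of the Lagrangian for $m=4$ together with the boundary condition $\beta\to 0$, $h(0)=0$, $h'(0)=1$ to pin the constant to $H\equiv 0$, and hence reduce to $1-(h')^2+h\,h''=0$. The two proofs diverge only in the last step. The paper differentiates this relation to get $h\,h'''-h'h''=0$, substitutes the Jacobi equation \eqref{equazionecurvaturaradiale} to deduce $K'\equiv 0$, and then invokes the fact (from Greene--Wu) that a model with constant radial curvature is a space form, landing in Cases 1A--1C of Proposition~\ref{proposizionediclassificazione}. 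You instead integrate the second-order ODE directly by reduction of order, obtaining $(h')^2=1+A\,h^2$ and hence the three explicit families $h(\alpha)=\alpha$, $(1/d)\sin(d\alpha)$, $(1/d)\sinh(d\alpha)$; this is slightly more self-contained, since it avoids the external classification of constant-curvature models, at the cost of an explicit quadrature (which you carry out correctly). Both routes then discard the flat case because the map is assumed proper. Your closing remarks on the limit $t\to-\infty$ and on the sign choice in \eqref{equazionediconformalita} correctly identify the only delicate points; note only that the bracket $1-(h')^2+h\,h''$ in fact tends to $0$ (not merely stays bounded) as $\beta\to 0$, which makes the conclusion $H\equiv 0$ immediate.
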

\begin{proof}Using the conformality hypothesis \eqref{conformalitadopocambiovariabile} and $m=4$, we compute explicitly the Hamiltonian $H$ (defined in \eqref{definizionehamiltoniana}) associated to \eqref{reducedbienergiabeta}: we obtain (up to a constant)
\begin{equation}\label{hamiltonianasuerre4}
    H= h^2\, (1-h'^2+ h\,h'') \,\, .
\end{equation}
Since the Lagrangian $L$ in \eqref{reducedbienergiabeta}, when $m=4$, does not depend on $t$, according to Proposition \ref{propositionH} we conclude that $H$ is constant along any solution. Now, since \eqref{boundaryconditionforbeta} holds and $h(0)=0,\,h'(0)=1$, we deduce that the only possibility is that $H \equiv 0$ along a solution. Therefore, in the presence of a nontrivial solution (i.e., $\beta \not \equiv 0$) we deduce that necessarily
\begin{equation}\label{equazionechiave}
    1-h'^2+ h\,h''=0 \,\, .
\end{equation}
Next, derivation of \eqref{equazionechiave} along a nonconstant solution leads us to conclude that
\begin{equation}\label{equazionechiavebis}
     h\,h'''-h' \, h''=0 \,\, .
\end{equation}
Now, using the Jacobi equation \eqref{equazionecurvaturaradiale} into \eqref{equazionechiavebis}, we deduce that, along a nonconstant solution,
\begin{equation}\label{equazionechiavetris}
     h'''= -\,K\, h' \,\, .
\end{equation}
On the other hand, taking derivatives on both sides of the Jacobi equation \eqref{equazionecurvaturaradiale}, we obtain
\begin{equation}\label{equazionechiavequadris}
     h'''= -\,K\, h' - K'\,h\,\, .
\end{equation}
Finally, comparing \eqref{equazionechiavetris} and \eqref{equazionechiavequadris}, we conclude that $K' \equiv 0$, from which it follows that the solution is either as in 1B of Proposition \ref{proposizionediclassificazione} (constant positive curvature) or as in 1C (constant negative curvature).
\end{proof}
\begin{remark} In the final part of the proof, we have implicitly used the property that a model
with constant radial curvature is a space form (see \cite{GW}).
\end{remark}

\subsection{Maps from a cylinder to a model.}
We show that the use of the Hamiltonian $H$ can be useful in other related contexts.
We consider rotationally symmetric maps as in \eqref{equivariantsymmetricmaps} and assume that $f(r) \equiv 1, \, r \in \R$ . The bienergy functional has (up to a constant) the following form:
\begin{equation}\label{reducedbienergiacilindro-modello}
    E_2(\varphi_\alpha) =\frac{1}{2}  \int_{-\, \infty}^{+\infty }\,\,  \left [\ddot{\alpha} - \,\lambda\, \, h(\alpha)\,h'(\alpha) \right ]^2 \,\,\, dr \,\, .
\end{equation}
In this case the Hamiltonian $H$, computed by means of \eqref{definizionehamiltoniana}, is given by:
\begin{equation}\label{hamiltoniana-cilindri}
    H= -\, \dot{\alpha}\,\dot{\tau_\alpha}  \,+\frac{1}{2}\,\tau_\alpha\,(\,2\ddot{\alpha}\,-\,\tau_\alpha\,) \,\, ,
\end{equation}
where $\tau_\alpha$ denotes the following expression:
\begin{equation}\label{definizioneditaualfa}
    \tau_\alpha= [\,\ddot{\alpha}\,-\, \lambda\, h(\alpha)\,h'(\alpha) \,]  \,\, .
\end{equation}
Since $L=L(\alpha,\, \ddot{\alpha})$, according to Proposition \ref{propositionH} the Hamiltonian $H$ is constant along solutions. By way of example, let us consider, in \eqref{reducedbienergiacilindro-modello}, the special case of maps into a sphere (i.e., $h(\alpha)= \sin \alpha$). In \cite{MR} we pointed out that there are constant, proper solutions
\begin{equation}\label{properconstantsolutions}
    \alpha \equiv \frac{\pi}{4} \quad {\rm and} \quad  \alpha \equiv \frac{3\,\pi}{4} \,\, .
\end{equation}
Here, by using the Hamiltonian $H$, we can prove that \emph{there are no other proper solutions $\varphi_\alpha$ such that $|\tau(\varphi_\alpha)| \,\equiv \, {\rm constant}$}. Indeed, suppose that $\alpha$ gives rise to a nonconstant proper solution such that
\begin{equation}\label{constantnormatau}
    |\tau(\varphi_\alpha)|^2=[\,\ddot{\alpha}\,-\, \lambda\, \sin (\alpha)\, \cos(\alpha) \,]^2 = C^2 \quad (C> 0) \,\, .
\end{equation}
Under these assumptions, the Hamiltonian $H$ in \eqref{hamiltoniana-cilindri} becomes
\begin{equation}\label{Hamiltoniana-cilindro-sfera}
    H= \pm \, \frac{C}{2}\,(\,\ddot{\alpha} - (\pm) \,C \,) \,\, ,
\end{equation}
from which we easily deduce that $\ddot{\alpha}$ is a constant. It follows that the quantity $\sin (\alpha)\, \cos(\alpha)$ must also be a constant, a fact from which our assertion follows immediately. By the same argument (simply use either $h(\alpha)=\alpha$ or $h(\alpha)=\sinh (\alpha)$ in \eqref{reducedbienergiacilindro-modello}), we can also deduce that there exists no proper biharmonic critical point of \eqref{reducedbienergiacilindro-modello} such that $|\tau(\varphi_\alpha)| \,\equiv \, {\rm constant}$, neither in $\R^m$ nor in $H^m$.

\section{Equivariant stability}\label{Equivariant-stability} In this section we carry out the study of the second variation at the proper biharmonic  conformal diffeomorphisms which we described in Proposition \ref{proposizionediclassificazione}. More precisely, we shall study the \emph {equivariant stability} of the solutions given in \eqref{definizionesoluzionedaeuclideoasfera} and \eqref{definizionesoluzionedaeuclideoaiperbolico}. Since we are only interested in the case that the domain model is $\R^4$ (or the ball $B^4 \,\subset \,\R^4$), it is convenient to make again the change of variable \eqref{cambiovariabile euclideanspaces}. For the sake of convenience, we rewrite as follows the relevant bienergy \eqref{reducedbienergiabeta} in terms of $\beta$:
\begin{equation}\label{bienergiaperstudiodistabilita}
    E_2(\varphi_\beta) =  \frac{1}{2}\, \int \,\,\left [\ddot{\beta} + \,2\, \dot{\beta}- \,3\,q(\beta)\, \right ]^2 \, dt \,\, ,
\end{equation}
where the integral is over the domain of $\beta$ and, to simplify the calculations of this section, we have set
\begin{equation}\label{definizionediq}
    q(\beta)=h(\beta)\,h'(\beta) \,\, .
\end{equation}
Note that we shall only be concerned with the cases $h(\beta)=(1\slash d)\,\sin (d\,\beta)$ (Case 1B of Proposition \ref{proposizionediclassificazione}) and $h(\beta)=(1 \slash d)\,\sinh (d\,\beta)$ (Case 1C of Proposition \ref{proposizionediclassificazione}). For future use, we observe that, in terms of $\beta$, the explicit expressions of the proper solutions \eqref{definizionesoluzionedaeuclideoasfera} and \eqref{definizionesoluzionedaeuclideoaiperbolico} are
\begin{equation}\label{definizionesoluzionebetadaeuclideoasfera}
    \beta(t)= \,\,\frac{2}{d} \, \arctan (c^2\,e^t)\,\, ,\,\, t\,\in \, \R \,\, (c>0)
\end{equation}
and
\begin{equation}\label{definizionesoluzionebetadaeuclideoaiperbolico}
 \beta(t)= \,\, \,\,\frac{2}{d} \, \tanh ^{-1} (c^2\,e^t)\,\,,\,\,t < \ln \left (\,\frac{1}{c^2}\, \right )\,\,(c>0)
\end{equation}
respectively. We shall say that a critical point $\varphi_\beta$ of the type \eqref{rotationallysymmetricmaps} is \emph{equivariant stable} if
\begin{equation}\label{definizioneequivariantstable}
  \nabla^2 \, E_2(\varphi_\beta)\,({\mathcal V},{\mathcal V}) \,=\, \frac {d^2 \,\, E_2(\varphi_{(\beta+sV)})  }{ds^2} \Big |_{s=0} \quad > \quad  0
\end{equation}
for all smooth vector fields ${\mathcal V}=V(t)\, \partial/\partial \beta$, such that  $V(t)$ is a real valued, compactly supported function  defined over the domain of $\beta(t)$ ($V(t) \not \equiv 0)$.

In the following lemma we compute explicitly the equivariant second variation in \eqref{definizioneequivariantstable}.

\begin{lemma}\label{lemmavariazioneseconda} At the conformal, proper critical points of Proposition \ref{proposizionediclassificazione} (Cases 1B and 1C, i.e., \eqref{definizionesoluzionebetadaeuclideoasfera} and \eqref{definizionesoluzionebetadaeuclideoaiperbolico} respectively) we have:
\begin{equation}\label{variazionesecondaesplicita}
    \nabla^2 \, E_2(\varphi_\beta)\,({\mathcal V},{\mathcal V})= \int \, \left \{\left [\ddot{V} +2\,\dot{V}-3\,q'(\beta)\,V \right ]^2+6\,q''(\beta)\,h(\beta)\,(h'(\beta)-1)\, V^2 \right \}\,dt \,\, ,
\end{equation}
where the integral is over the domain of $\beta(t)$ .
\end{lemma}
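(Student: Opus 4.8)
## Proof proposal

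The plan is to compute the second variation directly from the reduced bienergy functional \eqref{bienergiaperstudiodistabilita}, by expanding $E_2(\varphi_{(\beta+sV)})$ to second order in $s$ and extracting the coefficient of $s^2$. Writing $q = q(\beta)$ and using $q(\beta+sV) = q(\beta) + s\,q'(\beta)\,V + \tfrac{s^2}{2}\,q''(\beta)\,V^2 + O(s^3)$, the integrand of \eqref{bienergiaperstudiodistabilita} becomes a polynomial in $s$; squaring $\bigl[\ddot\beta + s\ddot V + 2\dot\beta + 2s\dot V - 3q - 3sq'V - \tfrac{3}{2}s^2 q'' V^2\bigr] + O(s^3)$ and collecting the $s^2$-term gives two contributions. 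First, the ``kinetic'' part $\bigl[\ddot V + 2\dot V - 3q'(\beta)V\bigr]^2$, coming from the square of the first-order variation of the integrand. Second, a ``potential'' cross term of the form $-3q''(\beta)V^2 \cdot \bigl[\ddot\beta + 2\dot\beta - 3q(\beta)\bigr]$, coming from the product of the zeroth-order term $\tau_\beta := \ddot\beta + 2\dot\beta - 3q(\beta)$ with the second-order variation $-\tfrac{3}{2}q''(\beta)V^2$ (the factor $2$ from the square cancels the $\tfrac12$). After dividing by the overall $\tfrac12$ and the $\tfrac12$ from the Taylor coefficient, this yields
\[
\nabla^2 E_2(\varphi_\beta)({\mathcal V},{\mathcal V}) = \int \left\{ \bigl[\ddot V + 2\dot V - 3q'(\beta)V\bigr]^2 - 3\,q''(\beta)\,\tau_\beta\,V^2 \right\}\, dt \,.
\]

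The remaining work is to identify $\tau_\beta = \ddot\beta + 2\dot\beta - 3q(\beta)$ at the critical points \eqref{definizionesoluzionebetadaeuclideoasfera} and \eqref{definizionesoluzionebetadaeuclideoaiperbolico}. Here I would invoke the conformality relation \eqref{conformalitadopocambiovariabile}, namely $\dot\beta = h(\beta)$, which holds for both solutions since they are conformal. Differentiating gives $\ddot\beta = h'(\beta)\dot\beta = h'(\beta)h(\beta) = q(\beta)$. Substituting, $\tau_\beta = q(\beta) + 2h(\beta) - 3q(\beta) = 2h(\beta) - 2q(\beta) = 2h(\beta)\bigl(1 - h'(\beta)\bigr) = -2h(\beta)\bigl(h'(\beta) - 1\bigr)$. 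Plugging this into the potential term, $-3q''(\beta)\tau_\beta V^2 = -3q''(\beta)\cdot\bigl(-2h(\beta)(h'(\beta)-1)\bigr)V^2 = 6\,q''(\beta)\,h(\beta)\,(h'(\beta)-1)\,V^2$, which is exactly the claimed expression \eqref{variazionesecondaesplicita}.

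There is no serious obstacle here; the argument is a routine second-variation computation made clean by the conformality identity. The one point requiring a little care is the bookkeeping of the numerical constants: the $\tfrac12$ in front of the bienergy, the $\tfrac12$ from the second-order Taylor coefficient of $q$, and the $2$ from the binomial expansion of the square must be tracked so that the final coefficient of $V^2$ comes out as $+6$ rather than $\pm3$ or $\pm12$. It is also worth noting that no integration by parts is needed to reach \eqref{variazionesecondaesplicita} — the formula is presented in the symmetric ``$\int [\,\cdots\,]^2 + (\text{potential})$'' form directly — and that the derivation uses only $\dot\beta = h(\beta)$ together with $h(0)=0$, $h'(0)=1$, so it applies uniformly to Cases 1B and 1C without separate treatment.
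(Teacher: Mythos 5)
Your proposal is correct and follows essentially the same route as the paper: first derive the intermediate formula $\int \{[\ddot V + 2\dot V - 3q'(\beta)V]^2 - 3q''(\beta)V^2(\ddot\beta + 2\dot\beta - 3q(\beta))\}\,dt$ by differentiating (equivalently, Taylor-expanding) in $s$, then use the conformality relation $\dot\beta = h(\beta)$, $\ddot\beta = q(\beta)$ to rewrite the potential term as $6\,q''(\beta)\,h(\beta)\,(h'(\beta)-1)\,V^2$. The constant bookkeeping checks out, so there is nothing to add.
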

\begin{proof}
First, we show that, for a generic critical point,
\begin{equation}\label{variazionesecondaesplicitaprimostep}
    \nabla^2 \, E_2(\varphi_\beta)\,({\mathcal V},{\mathcal V})= \int \,\Big \{\left [\ddot{V} +2\,\dot{V}-3\,q'(\beta)\,V \right ]^2\,-\,3\,q''(\beta)\,V^2 \, \left [ \ddot{\beta} + \,2\, \dot{\beta}- \,3\,q(\beta)\,\right ]\Big\}\,dt \,\, .
\end{equation}
To this purpose, the first step is to compute
\begin{eqnarray}\label{derivataprimanelcalcolovariazioneseconda}
&&\frac {d \,\, E_2(\varphi_{(\beta+sV)})  }{ds} =\\
&=& \frac{1}{2}\,\int \,\left \{\frac{d}{ds} \,\left [\ddot{\beta}+s\ddot{V} +2(\dot{\beta}+s\dot{V})-3\,q(\beta+sV) \right ]^2\, \right \}\,dt\nonumber \\ \nonumber
 &=&\int\, \left [\ddot{\beta}+s\ddot{V} +2(\dot{\beta}+s\dot{V})-3\,q(\beta+sV) \right ] \cdot \left [\ddot{V} +2\dot{V}-3\,q'(\beta+sV)\,V \right ] \, dt \,\,.
\end{eqnarray}
Next, using \eqref{derivataprimanelcalcolovariazioneseconda}:
\begin{eqnarray}
&&  \frac {d^2 \,\, E_2(\varphi_{(\beta+sV)})  }{ds^2} \Big |_{s=0} =\\
&=&   \int\, \left \{ \frac{d}{ds}\left [ \left (\ddot{\beta}+s\ddot{V} +2(\dot{\beta}+s\dot{V})-3\,q(\beta+sV) \right )
\left(\ddot{V} +2\dot{V}-3\,q'(\beta+sV)\,V \right )\right ] \right \}\Big |_{s=0} \, dt \nonumber\\ \nonumber
   &=& \int \,\left\{\left [\ddot{V} +2\,\dot{V}-3\,q'(\beta)\,V \right ]^2\,-\,3\,q''(\beta)\,V^2 \, \left [ \ddot{\beta} + \,2\, \dot{\beta}- \,3\,q(\beta)\,\right ]\right\}\,dt \,,
\end{eqnarray}
as required to prove \eqref{variazionesecondaesplicitaprimostep}. Finally, we use the fact that our critical points are conformal and so they verify \eqref{conformalitadopocambiovariabile}. Taking derivatives, from \eqref{conformalitadopocambiovariabile} we deduce
\begin{equation}\label{conseguenzadellaconformalitasubetasecondo}
    \ddot{\beta}=h'(\beta)\,\dot{\beta}=h'(\beta)\,h(\beta)=q(\beta)\,\,.
\end{equation}
Now, using \eqref{conformalitadopocambiovariabile} and \eqref{conseguenzadellaconformalitasubetasecondo} into \eqref{variazionesecondaesplicitaprimostep} we obtain \eqref{variazionesecondaesplicita}, as required to end the proof of the lemma.
\end{proof}

\begin{remark}
Integrating by parts \eqref{variazionesecondaesplicitaprimostep} and using the fact that $V(t)$ is a compactly supported function we obtain
$$
 \nabla^2 \, E_2(\varphi_\beta)\,({\mathcal V},{\mathcal V})= \int \, \langle I({\mathcal V}), {\mathcal V}\rangle\, dt\,\,,
$$
where
\begin{equation}\label{eq:second-variation-operator}
I({\mathcal V})=\Big\{ \ddddot{V}-\big[4+6 q'(\beta)\big]\ddot{V}+\big[9 q'^2(\beta)-3q''(\beta)\big(\ddot{\beta}-3q(\beta)\big)\big]V\Big\}\frac{\partial}{\partial \beta}\,\,.
\end{equation}
Direct computation shows that the operator $I({\mathcal V})$ defined in \eqref{eq:second-variation-operator} can be derived  by using the second variation formula obtained by Jiang in \cite{Jiang, Jiang-eng} (see also \cite{Oniciuc}).
\end{remark}
We are now in the right position to state our results in this context.
\begin{theorem}\label{teoremavariazionesecondaiperbolico} Let
$$
\varphi_\beta\,: \,B^4 (1 \slash c^2)\,\rightarrow\,H^4(-d^2)
$$
be the  rotationally symmetric, proper biharmonic conformal diffeomorphism defined by means of the function $\beta$ in \eqref{definizionesoluzionebetadaeuclideoaiperbolico} . Then $\varphi_\beta\,$ is equivariant stable.
\end{theorem}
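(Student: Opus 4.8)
The plan is to read off equivariant stability directly from the formula \eqref{variazionesecondaesplicita} in Lemma~\ref{lemmavariazioneseconda}: I will show that, when the target is hyperbolic, \emph{both} summands of the integrand are pointwise non-negative along the solution \eqref{definizionesoluzionebetadaeuclideoaiperbolico}, so that $\nabla^2 E_2(\varphi_\beta)({\mathcal V},{\mathcal V})\geq 0$, and then that equality forces $V\equiv 0$.

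First I would specialise \eqref{variazionesecondaesplicita} to Case 1C, where $h(\beta)=(1\slash d)\sinh(d\,\beta)$, hence $h'(\beta)=\cosh(d\,\beta)$ and, by \eqref{definizionediq}, $q(\beta)=h(\beta)h'(\beta)=(1\slash 2d)\sinh(2d\,\beta)$. A one-line computation then gives $q'(\beta)=\cosh(2d\,\beta)$ and $q''(\beta)=2d\sinh(2d\,\beta)$, so the ``potential'' coefficient appearing in \eqref{variazionesecondaesplicita} is
$$
6\,q''(\beta)\,h(\beta)\,\big(h'(\beta)-1\big)=12\,\sinh(2d\,\beta)\,\sinh(d\,\beta)\,\big(\cosh(d\,\beta)-1\big)\,.
$$
The key observation is that the solution \eqref{definizionesoluzionebetadaeuclideoaiperbolico} satisfies $\beta(t)>0$ for every $t$ in its domain $\big(-\infty,\ln(1\slash c^2)\big)$ (indeed $c^2 e^t\in(0,1)$ there, so $\tanh^{-1}(c^2 e^t)>0$). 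Consequently $\sinh(2d\,\beta(t))$, $\sinh(d\,\beta(t))$ and $\cosh(d\,\beta(t))-1$ are all strictly positive, and so the coefficient displayed above is strictly positive throughout the domain.

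Therefore the integrand in \eqref{variazionesecondaesplicita} is the sum of the perfect square $\big[\ddot V+2\dot V-3q'(\beta)V\big]^2\geq 0$ and a strictly positive multiple of $V^2$; integrating, $\nabla^2 E_2(\varphi_\beta)({\mathcal V},{\mathcal V})\geq 0$ for every admissible ${\mathcal V}$. To obtain strict positivity, assume $\nabla^2 E_2(\varphi_\beta)({\mathcal V},{\mathcal V})=0$: then the non-negative integrand vanishes identically, so in particular the strictly positive potential term forces $V^2\equiv 0$ on the (compact) support of $V$, whence $V\equiv 0$ by continuity, contradicting $V\not\equiv 0$. This proves that $\varphi_\beta$ is equivariant stable. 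No genuine obstacle arises in this case: everything hinges on the sign of the potential term in \eqref{variazionesecondaesplicita}, which is favourable precisely because the hyperbolic target makes $h'\geq 1$ while the conformal solution stays strictly positive. The delicate situation, in which this term can become negative and a sharper argument is needed, is the spherical one addressed in Theorem~\ref{teoremavariazionesecondaDirichlet}.
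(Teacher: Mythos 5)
Your proposal is correct and follows exactly the paper's argument: specialise \eqref{variazionesecondaesplicita} to $h(\beta)=(1\slash d)\sinh(d\beta)$, note that the potential term equals $24\cosh(d\beta)\sinh^2(d\beta)(\cosh(d\beta)-1)V^2$ (your expression $12\sinh(2d\beta)\sinh(d\beta)(\cosh(d\beta)-1)$ is the same quantity), and conclude from its strict positivity on the domain where $\beta>0$. You merely spell out the final positivity step that the paper calls ``immediate''.
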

\begin{proof}By using $h(\beta)=(1 \slash d) \, \sinh (d\, \beta)$ into \eqref{definizionediq} and \eqref{variazionesecondaesplicita}, we find that, in this case, the second variation takes the following form:
\begin{eqnarray}\label{variazionesecondacasoiperbolico}
    &&\quad      \nabla^2 \, E_2(\varphi_\beta)\,({\mathcal V},{\mathcal V})= \\ \nonumber
    &=& \int_{-\infty}^{\ln (\frac{1}{c^2})}  \left \{\left [\ddot{V} +2\,\dot{V}-3\cosh (2d\,\beta)\,V \right ]^2 +  24\,\cosh(d\,\beta)\,\sinh^2 (d\,\beta)\left ( \cosh (d\,\beta)-1 \right )\, V^2\right \}\,dt
\end{eqnarray}
from which the conclusion of the proof is immediate.
\end{proof}
In a context of Dirichlet's problem, we find that our conformal, proper biharmonic diffeomorphisms into $S^4$ are equivariant stable if we restrict them in such a way that only the closed, upper hemisphere of $S^4$ is covered. More precisely, we have
\begin{theorem}\label{teoremavariazionesecondaDirichlet} Let us consider proper biharmonic solutions of the boundary value problem
\begin{equation}\label{problemadidirichlet}
    \left \{\begin{array}{c}
              \varphi_\beta\,: \,\overline{B^4 (1 \slash c^2)}\,\rightarrow\,S^4(d^2) \\
              \,\\
              \beta(\ln (1 \slash c^2))=(\pi \slash (2\,d)) \,\, .
            \end{array}
\right .
\end{equation}
The conformal, rotationally symmetric, proper biharmonic solution $\varphi_\beta$, defined by means of the function $\beta$ in \eqref{definizionesoluzionebetadaeuclideoasfera} with $t \leq \ln (1 \slash c^2)$, is equivariant stable with respect to variations which preserve the boundary data.
\end{theorem}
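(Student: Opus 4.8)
The plan is to reduce the statement to a sign inspection of the explicit second variation furnished by Lemma~\ref{lemmavariazioneseconda}, exactly along the lines of the proof of Theorem~\ref{teoremavariazionesecondaiperbolico}; the single new ingredient is that the Dirichlet condition $\beta(\ln(1/c^2))=\pi/(2d)$ confines the solution to the range of values of $\beta$ where the potential term has the right sign.

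First I would substitute $h(\beta)=(1/d)\sin(d\beta)$ into \eqref{definizionediq} and \eqref{variazionesecondaesplicita}. A routine computation yields $h'(\beta)=\cos(d\beta)$, $q(\beta)=\frac{1}{2d}\sin(2d\beta)$, $q'(\beta)=\cos(2d\beta)$, $q''(\beta)=-2d\sin(2d\beta)$, so that $6\,q''(\beta)\,h(\beta)\,(h'(\beta)-1)=24\sin^2(d\beta)\cos(d\beta)(1-\cos(d\beta))$, and therefore
\[ \nabla^2 E_2(\varphi_\beta)(\mathcal V,\mathcal V)=\int_{-\infty}^{\ln(1/c^2)}\Big\{\big[\ddot V+2\dot V-3\cos(2d\beta)\,V\big]^2+24\sin^2(d\beta)\cos(d\beta)(1-\cos(d\beta))\,V^2\Big\}\,dt\,. \]
Since the formula of Lemma~\ref{lemmavariazioneseconda} was obtained purely by differentiating under the integral sign, it holds verbatim in this Dirichlet setting, with no boundary contribution, for every admissible (equivariant, boundary-data-preserving) variation $\mathcal V=V(t)\,\partial/\partial\beta$.

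The decisive observation is that, along the solution \eqref{definizionesoluzionebetadaeuclideoasfera} restricted to $t\le\ln(1/c^2)$, one has $\beta(t)=(2/d)\arctan(c^2e^t)\in[0,\pi/(2d)]$: indeed $\beta$ is strictly increasing, $\beta(t)\to0$ as $t\to-\infty$, and $\beta(\ln(1/c^2))=(2/d)\arctan(1)=\pi/(2d)$ by the boundary condition --- this is the analytic counterpart of the geometric fact that $\varphi_\beta$ covers exactly the closed upper hemisphere of $S^4(d^2)$. Hence $d\beta\in[0,\pi/2]$ throughout the domain, so $\cos(d\beta)\ge0$ and $1-\cos(d\beta)\ge0$, and the potential coefficient $24\sin^2(d\beta)\cos(d\beta)(1-\cos(d\beta))$ is nonnegative on the whole interval. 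Consequently $\nabla^2 E_2(\varphi_\beta)(\mathcal V,\mathcal V)$ is a sum of two nonnegative integrals, hence $\ge0$.

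To upgrade this to strict positivity, suppose $\nabla^2 E_2(\varphi_\beta)(\mathcal V,\mathcal V)=0$; then both summands must vanish identically. But on the open interval $(-\infty,\ln(1/c^2))$ one has $d\beta(t)\in(0,\pi/2)$ strictly, so the potential coefficient is strictly positive there, which forces $V\equiv0$ on that interval and hence, by continuity, $V\equiv0$ --- contradicting $V\not\equiv0$. I do not anticipate a genuine technical obstacle: the content is conceptual, namely to recognize that the full conformal map $\R^4\to S^4(d^2)\setminus\{\text{south pole}\}$ is \emph{not} covered by this argument --- there $d\beta$ eventually exceeds $\pi/2$, $\cos(d\beta)$ turns negative, and the potential term becomes sign-indefinite --- whereas the Dirichlet restriction to the upper hemisphere is precisely what keeps the potential nonnegative. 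The only remaining care concerns fixing the admissible class of boundary-data-preserving variations so that the (unweighted, since $m=4$) integral over $(-\infty,\ln(1/c^2)]$ converges and $\varphi_\beta$ is a constrained critical point in the sense of Remark~\ref{remarksullevariazionisuicompattiD}.
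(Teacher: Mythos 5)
Your proposal is correct and follows essentially the same route as the paper: substitute $h(\beta)=(1/d)\sin(d\beta)$ into the second variation of Lemma~\ref{lemmavariazioneseconda} to obtain formula \eqref{variazionesecondacasosfera}, and observe that the Dirichlet constraint confines $d\beta$ to $(0,\pi/2]$ so that the potential coefficient $24\cos(d\beta)\sin^2(d\beta)(1-\cos(d\beta))$ is nonnegative. Your added remarks (the absence of boundary terms since the lemma involves no integration by parts, and the strict-positivity argument forcing $V\equiv 0$) only spell out details that the paper leaves to the reader.
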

\begin{proof} Formally, we have to consider the equivariant second variation \eqref{variazionesecondaesplicita} with respect to compactly supported vector fields ${\mathcal V}=V(t)\, \partial/\partial \beta$ which, in order to preserve the boundary data, satisfy the additional condition
$$V(\ln (1 \slash c^2)) = \dot{V}(\ln (1 \slash c^2)) =0\,\,.$$
By using $h(\beta)=(1 \slash d) \, \sin (d\, \beta)$ into \eqref{definizionediq} and \eqref{variazionesecondaesplicita}, we find that, in this case, the second variation takes the following form:
\begin{eqnarray}\label{variazionesecondacasosfera}
  && \quad  \nabla^2 \, E_2(\varphi_\beta)\,({\mathcal V},{\mathcal V})= \\ \nonumber
    && \int_{-\infty}^{\ln (1\slash c^2)}  \left \{\left [\ddot{V} +2\,\dot{V}-3\cos (2d\,\beta)\,V \right ]^2 + 24\,\cos (d\,\beta)\,\sin^2 (d\,\beta)\,\left (1- \cos (d\,\beta)\,\right )\, V^2 \right \}\,dt \,\, .
\end{eqnarray}
Now, since $ 0 < \beta \leq \big (\pi \slash (2d) \big )$ when $t \leq \ln (1 \slash c^2)$, the conclusion of the proof follows easily from \eqref{variazionesecondacasosfera}.
It should be noted that in this case the map $\varphi_{\beta}$ covers exactly half sphere minus the pole.
\end{proof}
\begin{remark}
The methods of this section can be used to check that the proper biharmonic conformal diffeomorphisms described in A and C of Remark~\ref{re:referee} are equivariant stable. Similarly, the proper biharmonic examples described in B of Remark~\ref{re:referee}, restricted to $\R^4\setminus\{B^4 (c^2)\}$, are equivariant stable with respect to variations which preserve the boundary data (i.e., $\beta(\ln c^2)=\pi/ (2d)$).
\end{remark}

\begin{remark}\label{errata-corrige-MJM} We take this opportunity to point out that, in a similar stability context, formula $(3.9)$ in \cite{MR} should be replaced by
\begin{equation*}
\nabla^2 \, E_2^{\varphi}(\alpha^*)\,(V,V) = \int_0^{2 \pi} \, \, \left [ 2 \, \ddot{V}^2 - 2\, V^2 \,k^4 \,\right ] \, d \theta \,\, ,
\end{equation*}
from which it follows easily that the critical points of Theorem 3.2 in \cite{MR} are \emph{unstable}, and Remark 3.3 of \cite{MR} should then be deleted.
\end{remark}

\end{document}